\DeclareMathOperator{\IIm}{Im}
\DeclareMathOperator{\Ker}{Ker}
\newcommand{\ra}{{\rightarrow}}
\newcommand{\su}{\subset}
\newcommand{\mb}{\mathbb}
\newcommand{\nc}{\newcommand}
\nc{\al}{\alpha}
\newcommand{\fr}{\frac}
\theoremstyle{plain}
\newtheorem{theor}{Theorem}
\newtheorem{lemma}[theor]{Lemma}
\theoremstyle{remark}
\newtheorem{rmk}{Remark}
\newtheorem{examp}{Example}
\theoremstyle{definition}
\newtheorem{defin}{Definition}
\newtheorem{defin-prop}[theor]{Definition-Proposition}
\title{Conjugacy classes in discrete Heisenberg groups}
\author{Roman Budylin\\
\small{Steklov Mathematical Institute, Moscow, Russia}\\
\small{e-mail: {\tt budylin@mi.ras.ru}}}
\date{}
\begin{document}
\maketitle
\footnotetext[0]{
The author was supported by the grants MK 2856.2014.1, NSh-2998.2014.1, RFBR 13-01-12420, RFBR 14-01-00178, AG Laboratory NRU HSE, RF government grant, ag. 11.G34.31.0023.}
\begin{abstract}
We study some extension of a discrete Heisenberg group coming from the theory of loop-groups and find invariants of conjugacy classes in this group. In some cases including the case of the integer Heisenberg group we make these invariants more explicit.
\end{abstract}

Let us recall that a Heisenberg group $\rm{Heis} (3,R)$ over some ring $\rm R$ is the group of matrices
$$\begin{pmatrix}
1&n&c\\
0&1&p\\
0&0&1
\end{pmatrix},
$$
where $n,p,c\in \rm R$. When $\rm{R}=\mb Z$ we call such a group the integer Heisenberg group.
We shall use more general definition of the discrete Heisenberg group from \cite{PaAR}. The discrete Heisenberg group is the triple of abelian groups $(N,P,C)$ and a pairing $(\cdot,\cdot)$: $N\times P\ra C$.
The multiplication is the same as in the case of unipotent matrices:
$$\begin{pmatrix}
1&n&c\\
0&1&p\\
0&0&1
\end{pmatrix}
\begin{pmatrix}
1&n'&c'\\
0&1&p'\\
0&0&1
\end{pmatrix}
=\begin{pmatrix}
1&n+n'&c+c'+(n,p')\\
0&1&p+p'\\
0&0&1
\end{pmatrix}
$$
We would like to describe some algebraic geometry construction in which $\rm{Heis}(3,\mb Z)$ appears (cf.\cite{PaAR}).
Let $X$ be a surface over the field $k$. We fix a flag $P\in C$ on $X$ and assume that the point $P$ is a smooth point on $X$ and on the curve $C$. The 2-dimensional local field $K_{P,C}$ has the discrete valuation subring $\hat{O}_{P,C}$. It is mapped onto the local
field $k(C)_P$ on $C$. This local field contains his own discrete valuation subring $\hat{O}_P$ and
we denote its preimage in $\hat{O}_{P,C}$ by $\hat{O'}_{P,C}$. We set
$$\Gamma_{P,C} := K^*_{P,C}/\hat{O'}^*_{P,C}.$$
$\Gamma_{P,C}$ is (non-canonically) isomorphic to $\mb Z\oplus \mb Z$.
However, there is a canonical exact sequence of abelian groups
\begin{equation}
\begin{CD}
0 @>>> \mb Z @>>> \Gamma_{P,C} @>>>\mb Z @>>> 0.
\end{CD}
\label{locext}
\end{equation}
The map to $\mb Z$ in the sequence (\ref{locext}) corresponds to the discrete valuation $\nu_C$ with respect to $C$ and the subgroup $\mb Z$ corresponds to the discrete valuation $\nu_P$ on $C$ at $P$. A choice of
local coordinates $u$, $t$ in a neighborhood of $P$ such that locally $C = \{t = 0\}$ provides a
splitting of this exact sequence. The group $\Gamma_{P,C}$ will then be isomorphic to the subgroup $\{t^nu^m, n,m \in \mb Z\}$ in $K^*_{P,C}$. Change of a local parameter $t_C$ along the curve $C$ gives the automorphism of $\Gamma_{P,C}$ preserving extension (\ref{locext}).

There exists a well known central extension
\begin{equation}
\begin{CD}
1 @>>> k(C)^*_P @>i>> \tilde{K}^*_{P,C} @>\alpha>> K_{P,C}^* @>>> 1
\end{CD}
\label{ext2}
\end{equation}
with the map $\wedge^2 K_{P,C}^*\ra K(C)^*_P$
\footnote{this map is obtained by the commutator of liftings of two elements}
given by the tame symbol without sign $(f,g)\mapsto f^{\nu(g)}g^{-\nu(f)}\vline_C$. One can find an explicit description of this extension using torsors in \cite{Kac}.

Let us denote by $s$ a canonical section of extension (\ref{ext2}) over the subgroup $\hat{O}^*_{P,C}$. One may prove that the subgroup $s(\hat{O'}^*_{P,C}) i(\hat{O}^*_P)$ is normal in $\tilde{K}^*_{P,C}$. We put $\tilde{\Gamma}_{P,C}:=\tilde{K}_{P,C}/s(\hat{O'}^*_{P,C}) i(\hat{O}^*_P).$
Then we have the central extension
\begin{equation}
\begin{CD}
1 @>>> k(C)^*_P/\hat{O}^*_P @>>> \tilde{\Gamma}_{P,C} @>>> \Gamma_{P,C} @>>> 1.
\end{CD}
\label{ext3}
\end{equation}
The map obtained by the commutator of liftings for the extension (\ref{ext3}) is the standard symplectic form on $\mb Z^2$ so $\tilde{\Gamma}_{P,C}\simeq \rm{Heis}(3,\mb Z)$.

One can find in \cite{PaAR} the construction of some global Heisenberg-type groups corresponding to $X$. These groups encode a lot of  information about $X$, for example Chow groups can be recovered from them. Representation theory of discrete Heisenberg groups is studied in series of articles \cite{PaH},\cite{PaAR},\cite{PaA}.
It goes out more convenient to consider some extension of the Heisenberg group using a construction from the theory of loop groups. The point is that characters of irreducible representations in the extended Heisenberg group are not just distributions but functions. An important problem is to investigate central functions on the extended Heisenberg group and to obtain a Plancherel-type theorem. That is an explanation of our interest to conjugacy classes in the extended Heisenberg group. In this article we construct a full system of invariants, i.e. such set of invariants that two elements are conjugated if and only if values of invariants from the set coincide in these elements.  

This investigation originated from a question of A.N.Parshin and the author is very grateful to him for an encouragement. Âåsides I am thankfull to D.V.Osipov who thoroughly read the text and make a lot of essential remarks.
\section{The construction of extended Heisenberg group}
For abelian groups $P$ and $C$ we can consider the group $\rm{Aut_{gr}}(P\oplus C)$ of automorphisms which preserve the filtration $0\subset C\subset P\oplus C$ and acts trivially on its adjoint quotients (we call such automorphisms graded). The Heisenberg group can be represented as a semidirect product $(P\oplus C)\rtimes N$ where $N$ is a subgroup in $\rm{Aut_{gr}}(P\oplus C)$. Graded automorphism can be given by the homomorphism of $P$ to $P\oplus C$ such that the following diagram is commutative:
$$\xymatrix{
P\ar[r]\ar[dr]_{\rm{id}}&P\oplus C\ar[d]\\
&P
}
$$
So this homomorphism is the pair $(\rm{id},n:P\ra C)$. It implies that $\rm{Aut_{gr}}(P\oplus C)$ is abelian and that there is a pairing $$\rm{Aut_{gr}}(P\oplus C)\times P\ra C,$$ which gives us the pairing $N\times P\ra C$ from the definition of the Heisenberg group. $N$ acts on $P\oplus C$ in the following way:
$$n(p,c)=(p,c+n(p))$$
Now we examine a more general situation. Let $G=A\rtimes B$. We are interested in graded automorphisms of $G$, i.e. preserving the filtration $(1)\su A\su A\rtimes B$ and acting trivially on its adjoint quotients. Any such automorphism is defined by a homomorphism $\phi:B\ra G$ with some conditions. Let $\phi(b)=\alpha(b)b$, where $b\in B, \alpha(b)\in A$. What conditions on $\alpha$ should we apply to have graded automorphism?
\begin{lemma}
The map $\phi$ defines a graded automorphism $f$ of $G$ if and only if following two conditions hold:

\begin{enumerate}
\item $\alpha(b)$ belongs to $Z(A)$ -- the center of $A$.

\item $\alpha$ is a crossed homomorphism $B\ra Z(A)$, i.e. we have $\alpha(b_1)b_1\alpha(b_2)b_1^{-1}=\alpha(b_1b_2)$.
\end{enumerate}

\label{alpha}
\end{lemma}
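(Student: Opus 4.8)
The plan is to exploit the normal form in $G=A\rtimes B$: every element factors uniquely as $ab$ with $a\in A$, $b\in B$. A graded automorphism must restrict to the identity on $A$ (it acts trivially on the quotient $A/(1)=A$) and must induce the identity on $G/A\cong B$, so it is forced to have the shape $f(a)=a$, $f(b)=\alpha(b)b$ with $\alpha(b)\in A$, and hence $f(ab)=a\,\alpha(b)\,b$. Conversely, for any set map $\alpha:B\ra A$ this formula defines a well-defined \emph{function} $f$, precisely because the factorization $ab$ is unique; moreover the map $g(ab):=a\,\alpha(b)^{-1}b$ is then a two-sided set-theoretic inverse of $f$, again by uniqueness of the normal form and independently of any hypothesis on $\alpha$. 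Thus bijectivity and gradedness are automatic, and the entire content of the lemma is to determine when $f$ is a group \emph{homomorphism}. The strategy is to unwind the identity $f(g_1g_2)=f(g_1)f(g_2)$ into conditions on $\alpha$.

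First I would establish necessity. Writing $f(b)=\alpha(b)b$, multiplicativity of $f$ on the subgroup $B$ reads, after pushing $b_1$ through $\alpha(b_2)$, exactly as $\alpha(b_1)b_1\alpha(b_2)b_1^{-1}=\alpha(b_1b_2)$, which is condition (2). For condition (1) I would apply multiplicativity to two \emph{general} elements $a_1b_1$ and $a_2b_2$: putting both $f(a_1b_1\cdot a_2b_2)$ and $f(a_1b_1)f(a_2b_2)$ into normal form and cancelling with the help of (2), everything collapses to the single demand that $\alpha(b_1)$ commute with $b_1a_2b_1^{-1}$. Since $a_2\mapsto b_1a_2b_1^{-1}$ permutes $A$, this forces $\alpha(b_1)$ to commute with every element of $A$, i.e. $\alpha(b_1)\in Z(A)$, which is condition (1).

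For sufficiency I would run the same computation in reverse: granting (1) and (2), the cross terms match and $f$ preserves products, so $f$ is a homomorphism, hence (by the remark above) a graded automorphism. Consequently the whole weight of the lemma sits in the homomorphism identity, and I expect the main obstacle to be isolating the centrality condition (1). Unlike (2), which merely records that $\phi$ multiplies correctly on $B$, condition (1) surfaces only when one tests $f$ against products with nontrivial $A$-parts, where after applying the cocycle relation the sole surviving discrepancy is the commutator of $\alpha(b_1)$ with $b_1a_2b_1^{-1}$. Keeping careful track of this non-commutativity, and recognising that the discrepancy vanishes for all $a_2\in A$ exactly when $\alpha(b_1)\in Z(A)$, is the crux of the argument.
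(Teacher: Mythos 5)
Your proof is correct and follows essentially the same route as the paper: condition (2) is extracted from multiplicativity of $f$ on $B$, and condition (1) from multiplicativity against elements with nontrivial $A$-part, which is exactly what the paper phrases as $\phi$ ``preserving the conjugacy action of $B$ on $A$'' (your commutator of $\alpha(b_1)$ with $b_1a_2b_1^{-1}$ is the same discrepancy). You additionally verify a detail the paper leaves to the reader, namely that for any set map $\alpha$ the formula $f(ab)=a\,\alpha(b)\,b$ automatically gives a graded bijection (with inverse $ab\mapsto a\,\alpha(b)^{-1}b$), so the entire content of the lemma is indeed the homomorphism condition.
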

\begin{proof}
One can prove that $f$ is an automorphism if and only if $\phi$ is a homomorphism preserving the conjugacy action of $B$ on $A$.
The second condition is equivalent to $\phi$ being a homomorphism:
$$\alpha(b_1)b_1\alpha(b_2)b_2=\alpha(b_1b_2)b_1b_2,$$
$$\alpha(b_1)b_1\alpha(b_2)b_1^{-1}=\alpha(b_1b_2).$$
The first condition is equivalent to $\phi$ preserving the conjugacy action of $B$ on $A$:
$$\phi(b)(a)=f(b(a))=b(a).$$
So $\alpha(b)$ commutes with every $a\in A$.
\end{proof}
Now we consider a group $(A\rtimes B)\rtimes \rm{Aut_{gr}}G$. The following definition is slightly more general than analogous from \cite{PaAR}.
\begin{defin}
\label{Heis}
An extended Heisenberg group is the group $((P\oplus C)\rtimes N)\rtimes K)$ for some subgroup $K$ in $\rm{Aut_{gr}}((P\oplus C)\rtimes N)$.
\end{defin}
Let us find crossed homomorphisms $k:N\ra P\oplus C$ satisfying conditions of lemma \ref{alpha}. Let $k(n)=(k_p(n),k_c(n))$. As $P\oplus C$ is abelian the first conditon is satisfied. Let us check out the second condition, let $k(n)=(k_p(n),k_c(n))$:
$$(k_p(n_1),k_c(n_1))(k_p(n_2),k_c(n_2)+n_1(k_p(n_2))=(k_p(n_1+n_2),k_c(n_1+n_2)),$$
$$(k_p(n_1)+k_p(n_2),k_c(n_1)+k_c(n_2)+n_1(k_p(n_2)))=(k_p(n_1+n_2),k_c(n_1+n_2)).$$
So $k_p:N\ra P$ is a homomorphism and for $k_c$ we have the condition:
\begin{equation}
k_c(n_1+n_2)=k_c(n_1)+k_c(n_2)+n_1(k_p(n_2))
\label{add}
\end{equation}
\begin{rmk}
\label{comm}
From (\ref{add}) it follows that $n_1(k_p(n_2))=n_2(k_p(n_1))$ because $N$ is abelian.
\end{rmk}
\begin{rmk}
Condition (\ref{add}) holds when we add some homomorphism $k_c^{\phi}:N\ra C$ to $k_c$.
\end{rmk}
\begin{examp}
\label{ex}
The map $k_c(n)=k\fr{n(n-1)}{2}:\mb Z\ra \mb Z$ satisfies condition (\ref{add}) for $k_p(n)=kn:\mb Z\ra \mb Z$. So the following action of $K=\mb Z$ on the integer Heisenberg group is graded:
$$k(p,c,n)=(p+kn,c+k\fr{n(n-1)}{2},n).$$
And definition \ref{Heis} gives the extended integer Heisenberg group.
\end{examp}
So an element $k$ of $\rm{Aut_{gr}}((P\oplus C)\rtimes N)$ is given by a pair $(k_p,k_c)$ where $k_p:N\ra P$ is a homomorphism and $k_c:N\ra C$ satisfies (\ref{add}). The action of $k$ on $(P\oplus C)\rtimes N$ is following:
$$k(p,c,n)=(p+k_p(n),c+k_c(n),n).$$
A composition of such actions has $k_p=k_p^1+k_p^2$ and $k_c=k_c^1+k_c^2$, so $\rm{Aut_{gr}}$ is abelian.
Let us study functions $k_c$ more closely.
\begin{lemma}
Let $N$ be a group $\mb Z/l_1\mb Ze_1\oplus \mb Z/l_2\mb Ze_2\oplus \dots \mb Z/l_s\mb Ze_s$ where $l_i\in \mb Z$ and let we have a homomorphism $k_p:N\ra P$. Then $k_c$ with condition (\ref{add}) exists if and only if:
\begin{enumerate}
\item
\label{c1} $n_1(k_p(n_2))=n_2(k_p(n_1))$ for any $n_1,n_2$.

\item
\label{c2}$\fr{l_i(l_i-1)}{2}e_i(k_p(e_i))\in l_iC.$
\end{enumerate}
If $k_c$ exists then it can be represented as:
$$k_c(m_1e_1+\dots+m_se_s)=\sum_{i<j}m_im_je_i(k_p(e_j))+\sum_i \fr{1}{2}m_i(m_i-1)e_i(k_p(e_i))+m_ix_i+k_c^{\phi}(m_1e_1+\dots+m_se_s),$$
where $m_i\in \mb Z$ and $k_c^{\phi}$ is a homomorphism from $N$ to $C$ and $x_i$ is any solution of the equation $l_ix_i+\fr{l_i(l_i-1)}{2}e_i(k_p(e_i))=0$.\end{lemma}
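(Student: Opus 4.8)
The plan is to reformulate condition (\ref{add}) as the problem of finding a function with a prescribed (symmetric) quadratic coboundary, and to solve it by first lifting to a free abelian group and then descending to $N$. Write $a_{ij}:=e_i(k_p(e_j))$ and, for $n=\sum_i m_ie_i$ and $n'=\sum_j m_j'e_j$, set $\beta(n,n'):=n(k_p(n'))=\sum_{i,j}m_im_j'a_{ij}$. Since $k_p$ is a homomorphism and the pairing $N\times P\ra C$ is biadditive, $\beta$ is a bilinear form, and condition (\ref{add}) is exactly the requirement that $k_c(n_1+n_2)-k_c(n_1)-k_c(n_2)=\beta(n_1,n_2)$; in cohomological language $\beta$ is a $2$-cocycle and $k_c$ exists iff its class in $H^2(N,C)$ (trivial action) vanishes, though the argument below is fully explicit.

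Necessity of both conditions is quick. Interchanging $n_1$ and $n_2$ in (\ref{add}) and using that $N$ is abelian forces $\beta(n_1,n_2)=\beta(n_2,n_1)$, which is condition \ref{c1} (this is Remark \ref{comm}). For condition \ref{c2}, iterating (\ref{add}) along a single generator gives the one–variable formula $k_c(me_i)=mk_c(e_i)+\fr{m(m-1)}2a_{ii}$; setting $m=l_i$ and using $l_ie_i=0$ in $N$ yields $0=k_c(0)=l_ik_c(e_i)+\fr{l_i(l_i-1)}2a_{ii}$, so $\fr{l_i(l_i-1)}2a_{ii}\in l_iC$, i.e. condition \ref{c2}, and $x_i:=k_c(e_i)$ is a solution of the displayed equation.

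For sufficiency I would work on the free abelian group $\mb Z^s$ with basis $\epsilon_1,\dots,\epsilon_s$ and the surjection $\pi:\mb Z^s\ra N$, $\epsilon_i\mapsto e_i$. Choosing for each $i$ a solution $x_i$ of $l_ix_i+\fr{l_i(l_i-1)}2a_{ii}=0$ (available by condition \ref{c2}), I define $\tilde k_c$ on $\mb Z^s$ by the stated formula (temporarily with $k_c^{\phi}=0$). A direct bookkeeping check—using the symmetry $a_{ij}=a_{ji}$ from condition \ref{c1} to combine the $i<j$ and $i>j$ contributions—shows that $\tilde k_c$ satisfies (\ref{add}) on $\mb Z^s$: the linear terms $m_ix_i$ cancel in the difference $\tilde k_c(n+n')-\tilde k_c(n)-\tilde k_c(n')$, while the quadratic terms reproduce $\beta$ exactly.

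The crux is the descent of $\tilde k_c$ along $\pi$, i.e. that $\tilde k_c$ is constant on the fibers of $\pi$. The kernel of $\pi$ is generated by the elements $l_ie_i$, so it suffices to prove $\tilde k_c(n+l_ie_i)=\tilde k_c(n)$ for every $n$ and every $i$, after which one iterates over the generators of $\Ker\pi$. By (\ref{add}) this difference equals $\tilde k_c(l_ie_i)+\beta(n,l_ie_i)$. Here $\beta(n,l_ie_i)=n(k_p(l_ie_i))=n(l_ik_p(e_i))$, and since $k_p$ is a homomorphism out of the torsion group $N$ we have $l_ik_p(e_i)=k_p(l_ie_i)=0$ in $P$, so the bilinear term vanishes; and $\tilde k_c(l_ie_i)=l_ix_i+\fr{l_i(l_i-1)}2a_{ii}=0$ by the choice of $x_i$. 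Thus $\tilde k_c$ descends to a well-defined $k_c:N\ra C$ satisfying (\ref{add}). This is the step I expect to be the real obstacle, precisely because it is where both hypotheses are consumed: condition \ref{c2} makes $\tilde k_c(l_ie_i)$ vanish, while the homomorphism property (equivalently, the torsion of $N$) kills the mixed term. Finally, since any two solutions of (\ref{add}) differ by a map $d$ with $d(n_1+n_2)=d(n_1)+d(n_2)$, i.e. a homomorphism $N\ra C$, the general solution is obtained from a particular one by adding an arbitrary homomorphism $k_c^{\phi}$ and by letting each $x_i$ range over all solutions of its defining equation, which yields exactly the displayed formula.
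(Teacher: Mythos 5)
Your proof is correct, and it proves the same statement with the same explicit formula, but the sufficiency step is organized along a genuinely different route. The paper defines the candidate function $k''_c$ directly on $N$ and establishes well-definedness by hand: it checks that the formula is unchanged under $m_i\mapsto m_i+l_i$ (using condition \ref{c2} together with $l_ik_p(e_i)=0$) and that it is independent of the order of summands (condition \ref{c1}), and then asserts that the resulting function satisfies (\ref{add}). You instead lift to the free abelian group $\mb Z^s$, verify (\ref{add}) there --- where it is a transparent polynomial identity in the $m_i$, with no congruence bookkeeping --- and then obtain well-definedness on $N$ as a formal consequence of the functional equation itself: $\tilde k_c(n+l_i\epsilon_i)-\tilde k_c(n)=\tilde k_c(l_i\epsilon_i)+\beta(n,l_i\epsilon_i)$, and both terms vanish, the first by the choice of $x_i$ (condition \ref{c2}) and the second because $l_ik_p(e_i)=k_p(l_ie_i)=0$. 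This is the standard ``cocycle on a free presentation plus descent'' argument; what it buys is that the two verifications the paper must do separately (invariance under $m_i\mapsto m_i+l_i$, and the validity of (\ref{add}) on $N$, the latter left to the reader) collapse into one clean check over $\mb Z^s$ followed by a two-line descent. What the paper's direct approach buys is that no auxiliary group or pulled-back pairing needs to be introduced. Your necessity arguments and the final step (two solutions of (\ref{add}) differ by a homomorphism, which also absorbs the ambiguity in the choice of the $x_i$) coincide with the paper's. One cosmetic slip: the generators of $\Ker\pi$ should be written $l_i\epsilon_i$ rather than $l_ie_i$, and the phrase ``torsion group $N$'' is unnecessary --- all you need is $l_ie_i=0$ in $N$, which holds even when some $l_i=0$.
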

\begin{proof}
It's not difficult to prove that $k_c(0)$ equals zero.
Also one can proof using condition (\ref{add}) that
$$k_c(ne_i)=nk_c(e_i)+\fr{n(n-1)}{2}e_ik_p(e_i).$$
This formula gives us a map $k'_c(n)$ from $\mb Z$ to $N$ satisfying condition (\ref{add}) for $k_p'(n)=k_p(ne_i)$. The map $k_c'$ comes from some $k_c$ with conditon (\ref{add}) from $\mb Z/ l_i\mb Z e_i$ to $N$ if and only if $k_c'(l_i)=0$. So we need condition \ref{c2} to define $k_c$ on $\mb Z /l_i \mb Ze_i$ . Condition \ref{c1} is necessary because $k_c(n_1+n_2)= k_c(n_2+n_1)$.

Now suppose that conditions \ref{c1} and \ref{c2} hold and let us prove that a function $k_c$ with required properties exists. Define $x_i\in C$ as any solution of the equation $l_ix_i+\fr{l_i(l_i-1)}{2}e_i(k_p(e_i))=0$. Consider a function $k''_c:N\ra C$ defined by the formula
$$k''_c(m_1e_1+\dots+m_se_s)=\sum_{i<j}m_im_je_i(k_p(e_j))+\sum_i \fr{m_i(m_i-1)}{2}e_i(k_p(e_i))+m_ix_i.$$
Condition \ref{c1} implies that this formula doesn't depend on the order of summands. Also we have that $\fr{m_i(m_i-1)}{2}e_i(k_p(e_i))+m_ix_i=\fr{(m_i+l_i)(m_i+l_i-1)}{2}e_i(k_p(e_i))+(m_i+l_i)x_i$ because of condition \ref{c2} and $l_ik_p(e_i)=0$. So the function $k''_c$ is correctly defined.
One can prove that it satisfies condition (\ref{add}). At last if there exists some $k_c$, then the function $k_c-k''_c$ is just a homomorphism. This proves the last statement.
\end{proof}
\begin{rmk}
\label{2kc}
For any function $\gamma \colon N\ra C$ we can consider its polarization $\hat{\gamma}(n_1,n_2)=\gamma(n_1+n_2)-\gamma(n_1)-\gamma(n_2)$. It is clear that if polarizations of two functions are equal then difference of this functions is a homomorphism $N\ra C$. Polarizations of $2k_c(n)$ and $nk_p(n)$ are equal so $2k_c(n)=\phi_k(n)+n(k_p(n)),$ where $\phi_k(n)$ is a homomorphism $N\ra C$ depending from $k$.
\end{rmk}

\section{Conjugacy classes in a Heisenberg group}
Consider an extended Heisenberg group $G=((P\oplus C)\rtimes N)\rtimes K)$. The formula of multiplication in this group is the following:
\begin{multline*}
(p',c',n',k')*(p_1,c_1,n_1,k_1)=\\
(p'+p_1+k'_p(n_1),c_1+c'+k'_c(n_1)+n'(p_1+k'_p(n_1)),n_1+n',k_1+k')
\end{multline*}
Let $x_2=(p_2,c_2,n_2,k_2)$ be conjugated to $x_1=(p_1,c_1,n_1,k_1)$ by means of $(p',c',n',k')$. It is equivalent to
\begin{multline*}
(p'+p_1+k'_p(n_1),c_1+c'+k'_c(n_1)+n'(p_1+k'_p(n_1)),n_1+n',k_1+k')=\\
=(p_2+p'+k_{2,p}(n'),c_2+c'+k_{2,c}(n')+n_2(p'+k_{2,p}(n')),n_2+n',k_2+k')
\end{multline*}
So we get that $n_1=n_2=n$, $k_1=k_2=k$ and
\begin{equation}
\label{sis1}
\left\{
\begin{aligned}
p_1-p_2&=k_p(n')-k'_p(n)\\
c_1+k'_c(n)+n'(p_1+k'_p(n)) & =c_2+k_c(n')+n(p'+k_p(n')).\\
\end{aligned}
\right.
 \end{equation}
Using the summand $n(p')$ in the second equation of (\ref{sis1}) and Remark \ref{comm} we can reduce the second equation $\rm{mod} \IIm n$, where $n:P\ra C$ is a homomorphism given by fixed $n\in N$. So we get
\begin{equation}
\label{sis2}
\left\{
\begin{aligned}
p_1-p_2&=k_p(n')-k'_p(n)\\
c_1-c_2&=k_c(n')-k'_c(n)-n'(p_1) \pmod{\IIm n}.\\
\end{aligned}
\right.
 \end{equation}
Now we fix $n$ and $k$ untill the end of the next section and obtain the system of equations on $n',k'$ with coefficients depending on $p_i,c_i$.

Let us denote by $\Lambda$ the homomorphism $(n',k')\mapsto k_p(n')-k'_p(n)$ from $N\oplus K$ to $P$. It is obvious that $R((p,c,n,k))=p\in P/\IIm \Lambda$ is an invariant of a conjugacy class of $(p,c,n,k)$.
Suppose that
\begin{equation}
\text{ all considered elements have the same value $r$ of the invariant $R$.}
    \label{cond}
\end{equation}
 Let us denote by $B_{12}$ the map $(n',k')\mapsto k_c(n')-k'_c(n)-n'(p_1)$ from $N\oplus K$ to $C /\IIm n$. Let us notice that $B_{12}$ depends only on the first index. Now we consider sets $V_{12}:=c_1-c_2-B_{12}(\Lambda^{-1}(p_1-p_2))\subset C /\IIm n$. Indices $12$ mean that $B_{12}$ and $V_{12}$ correspond to the pair $(x_1,x_2)$. In the same way we can define $V_{ij}$ when we have a collection $\{x_i\in G,i\in I\}$.
\begin{lemma}
\label{sol}
Under condition (\ref{cond}) we have that

1. $V_{12}=-V_{21}$

2. $V_{12}+V_{23}=V_{13}$
\end{lemma}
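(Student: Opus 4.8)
The plan is to observe that both asserted identities are equalities of \emph{subsets} (in fact cosets) of $C/\IIm n$, and to reduce each to a single pointwise fact: that $B$ is additive up to $\IIm n$ along the fibres of $\Lambda$. First I would record the elementary point that $\Lambda\colon N\oplus K\to P$, $(n',k')\mapsto k_p(n')-k'_p(n)$, is a group homomorphism, its additivity following because $k_p$ is a homomorphism and because composition in the graded automorphism group adds the $P$-components. Consequently each preimage $\Lambda^{-1}(p_i-p_j)$ is a coset of $\ker\Lambda$, these cosets are nonempty by condition (\ref{cond}), and as subsets of $N\oplus K$ they satisfy $\Lambda^{-1}(p_1-p_2)+\Lambda^{-1}(p_2-p_3)=\Lambda^{-1}(p_1-p_3)$ and $-\Lambda^{-1}(p_1-p_2)=\Lambda^{-1}(p_2-p_1)$.

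The heart of the proof is a direct computation of a defect. Taking $(n',k')\in\Lambda^{-1}(p_1-p_2)$ and $(n'',k'')\in\Lambda^{-1}(p_2-p_3)$ (so $(n'+n'',k'+k'')\in\Lambda^{-1}(p_1-p_3)$), I would expand $B_{13}(n'+n'',k'+k'')-B_{12}(n',k')-B_{23}(n'',k'')$ using (\ref{add}) for $k_c$, the additivity $(k'+k'')_c=k'_c+k''_c$, and the biadditivity of the pairing $N\times P\to C$. Recalling that $B_{13}=B_{12}$ since $B$ depends only on its first index, the terms collapse to $n'(k_p(n''))-n''(p_1-p_2)$; substituting $p_1-p_2=\Lambda(n',k')=k_p(n')-k'_p(n)$ and applying Remark \ref{comm} to the fixed automorphism $k$ cancels $n'(k_p(n''))$ against $n''(k_p(n'))$, leaving only $n''(k'_p(n))$. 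The crucial point is that Remark \ref{comm}, now applied to the auxiliary automorphism $k'\in K$, gives $n''(k'_p(n))=n(k'_p(n''))\in\IIm n$, so that $B_{13}(n'+n'',k'+k'')\equiv B_{12}(n',k')+B_{23}(n'',k'')\pmod{\IIm n}$.

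Letting $(n',k')$ and $(n'',k'')$ range over the two fibres (their sums exhausting $\Lambda^{-1}(p_1-p_3)$) then upgrades this pointwise congruence to the set identity $B_{12}(\Lambda^{-1}(p_1-p_2))+B_{23}(\Lambda^{-1}(p_2-p_3))=B_{13}(\Lambda^{-1}(p_1-p_3))$, which is exactly statement~2 after subtracting both sides from $c_1-c_3$. Statement~1 comes out of the same mechanism applied to negation: computing $B_{21}(-n',-k')+B_{12}(n',k')$ for $(n',k')\in\Lambda^{-1}(p_1-p_2)$, using (\ref{add}) on the pair $(n',-n')$ to get $k_c(n')+k_c(-n')=n'(k_p(n'))$, and reducing the remainder to $n'(k'_p(n))=n(k'_p(n'))\in\IIm n$ via Remark \ref{comm}, so that $B_{21}(-(n',k'))\equiv -B_{12}(n',k')\pmod{\IIm n}$ and hence $V_{12}=-V_{21}$.

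I expect the one genuinely substantive step to be this last reduction: the leftover cross-terms $n''(k'_p(n))$ and $n'(k'_p(n))$ are not visibly elements of $\IIm n$, and it is precisely the symmetry of Remark \ref{comm}—invoked for the auxiliary automorphisms $k',k''\in K$ rather than for the fixed $k$—that transports them into $\IIm n$. This also explains why reducing modulo $\IIm n$ from the start (the passage from (\ref{sis1}) to (\ref{sis2})) is what makes these cocycle relations hold exactly rather than only up to an indeterminate correction. The remaining manipulations are the routine bookkeeping of cosets under the homomorphism $\Lambda$.
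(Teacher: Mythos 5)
Your proof is correct and takes essentially the same route as the paper: the identical defect computation, using equation (\ref{add}), the additivity of the $K$-components, and Remark \ref{comm} applied both to the fixed automorphism $k$ (to cancel $n'(k_p(n''))$ against $n''(k_p(n'))$) and to the auxiliary $k'$ (to push the leftover $n''(k'_p(n))=n(k'_p(n''))$ into $\IIm n$). The only difference is cosmetic, at the end of part 2: the paper deduces the inclusion $V_{12}+V_{23}\subseteq V_{13}$ from the pointwise identity and then invokes part 1 to get the reverse inclusion, whereas you obtain equality directly from the fact that the two nonempty $\Ker\Lambda$-cosets sum to the full coset $\Lambda^{-1}(p_1-p_3)$ under condition (\ref{cond}); both wrap-ups are valid.
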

\begin{proof}
\begin{enumerate}
\item
 Let $n',k'$ be elements from $\Lambda^{-1}(p_1-p_2)$. Then $-n',-k'$ are elements from $\Lambda^{-1}(p_2-p_1)$ and it is sufficient to prove that
$B_{12}(n',k')=-B_{21}(-n',-k')$. We have the following chain of equalities:
\begin{multline*}
B_{12}(n',k')+B_{21}(-n',-k')=(k_c(n')-k'_c(n)-n'(p_1))+k_c(-n')+k'_c(n)+n'(p_2)=\\
=k_c(n')+k_c(-n')+n'(p_2-p_1)=k_c(0)-n'(k_p(-n'))+n'(k'_p(n)-k_p(n'))=0
\end{multline*}
Here we use definitions of $B_{ij}$ and $\Lambda$, the equality $n'(k'_p(n))=n(k'_p(n'))=0$ in $C/\IIm n$, the equality $k_c(0)=0$ and the equation (\ref{add}).

 \item
 Let $n'_1,k'_1$ and $n'_2,k'_2$ be elements of $\Lambda^{-1}(p_1-p_2)$ and $\Lambda^{-1}(p_2-p_3)$ correspondingly. Then we have
\begin{multline*}B_{12}(n'_1,k'_1)+B_{23}(n'_2,k'_2)=(k_c(n'_1)-k'_{1,c}(n)-n'_1(p_1))+(k_c(n'_2)-k'_{2,c}(n)-n'_2(p_2))=\\
=(k_c(n'_1)+k_c(n'_2))+n'_2(p_1-p_2)-(k'_{1,c}+k'_{2,c})(n)-(n'_1+n'_2)(p_1)=\\
=(k_c(n'_1+n'_2)-n'_1(k_p(n'_2)))+n'_2(k_p(n'_1)-k'_{1,p}(n))-(k'_{1,c}+k'_{2,c})(n)-(n'_1+n'_2)(p_1)=\\
=-n'_1(k_p(n'_2))+n'_2(k_p(n'_1)-k'_{1,p}(n))+B_{13}(n'_1+n'_2,k'_1+k'_2)=B_{13}(n'_1+n'_2,k'_1+k'_2)
\end{multline*}
In the fourth equality we use the first equation from system (\ref{sis2}) which holds because of condition (\ref{cond}). Also we use that $n'(k'_p(n))=n(k'_p(n'))=0$ in the fifth equality.
So from this chain of equalities we get that $V_{12}+V_{23}\su V_{13}$. Using that $V_{23}=-V_{32}$ we get the inverse inclusion.

\end{enumerate}
\end{proof}
\begin{rmk}
\label{Vrem}
Let us notice that $V=B_{12}(\Ker \Lambda)$ doesn't depend on a pair of elements. It depends only on the value of $R$. Indeed, we have $B_{12}(\Ker\Lambda)=V_{11}$ and by lemma \ref{sol} $V_{11}=V_{12}+V_{21}=V_{21}+V_{12}=V_{22}=B_{23}(\Ker \Lambda)$. So $V$ is also an invariant of a conjugacy class. We have that $V_{11}+V_{11}=V_{11}$ and $0\in V_{11}$ because $V_{11}=-V_{11}$. So $V$ is a group.
\end{rmk}

\begin{lemma}
$V_{12}$ is a coset in the group $C /\IIm n$ by the subgroup $B_{12}(\Ker \Lambda).$
\end{lemma}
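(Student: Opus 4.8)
The plan is to show that $B_{12}$ becomes an honest homomorphism modulo $\IIm n$ as soon as one of its arguments is restricted to $\Ker\Lambda$; the coset structure of $V_{12}$ then follows formally. First I would record the quasi-additivity of $B_{12}$. Expanding $B_{12}(n'_1+n'_2,k'_1+k'_2)$, substituting condition (\ref{add}) for $k_c(n'_1+n'_2)$, and using the additivity of $k'\mapsto k'_c$ in $K$ and of the pairing $n'\mapsto n'(p_1)$, I expect to obtain
\begin{equation*}
B_{12}\big((n'_1,k'_1)+(n'_2,k'_2)\big)=B_{12}(n'_1,k'_1)+B_{12}(n'_2,k'_2)+n'_1(k_p(n'_2)),
\end{equation*}
so that the only obstruction to additivity is the cross-term $n'_1(k_p(n'_2))$.

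The heart of the argument is to check that this obstruction dies in $C/\IIm n$ once $(n'_2,k'_2)\in\Ker\Lambda$. Indeed, $\Lambda(n'_2,k'_2)=k_p(n'_2)-k'_{2,p}(n)=0$ gives $k_p(n'_2)=k'_{2,p}(n)$, whence $n'_1(k_p(n'_2))=n'_1(k'_{2,p}(n))$. Applying the symmetry of Remark \ref{comm} to the homomorphism $k'_{2,p}$, which comes with its own $k'_{2,c}$ satisfying (\ref{add}), yields $n'_1(k'_{2,p}(n))=n(k'_{2,p}(n'_1))$, and the latter lies in $\IIm n$ by definition. Hence
\begin{equation*}
B_{12}\big((n'_1,k'_1)+(n'_2,k'_2)\big)=B_{12}(n'_1,k'_1)+B_{12}(n'_2,k'_2)\quad\text{in }C/\IIm n
\end{equation*}
whenever $(n'_2,k'_2)\in\Ker\Lambda$. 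I expect this vanishing-modulo-$\IIm n$ step to be the main obstacle, since it is exactly where the symmetry of Remark \ref{comm} must be combined with the reduction modulo $\IIm n$ to kill the stubborn cross-term.

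To finish, condition (\ref{cond}) forces $p_1-p_2\in\IIm\Lambda$, so $\Lambda^{-1}(p_1-p_2)$ is a nonempty coset of $\Ker\Lambda$; fix any $(n'_0,k'_0)$ in it, so that $\Lambda^{-1}(p_1-p_2)=(n'_0,k'_0)+\Ker\Lambda$. The displayed additivity then gives
\begin{equation*}
B_{12}\big(\Lambda^{-1}(p_1-p_2)\big)=B_{12}(n'_0,k'_0)+B_{12}(\Ker\Lambda)=B_{12}(n'_0,k'_0)+V,
\end{equation*}
a single coset of the subgroup $V=B_{12}(\Ker\Lambda)$, which is a group by Remark \ref{Vrem}. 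Consequently
\begin{equation*}
V_{12}=c_1-c_2-B_{12}\big(\Lambda^{-1}(p_1-p_2)\big)=\big(c_1-c_2-B_{12}(n'_0,k'_0)\big)+V,
\end{equation*}
using $-V=V$, which is precisely the assertion that $V_{12}$ is a coset of $B_{12}(\Ker\Lambda)$ in $C/\IIm n$.
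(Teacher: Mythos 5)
Your proof is correct, but it follows a genuinely different route from the paper's. The paper's own proof is purely formal: it fixes $x\in V_{12}$ and sandwiches, using the identities of Lemma \ref{sol} ($V_{12}=-V_{21}$, $V_{12}+V_{23}=V_{13}$) together with Remark \ref{Vrem}, to get $x-V_{12}\su V_{11}$ and $x+V_{11}\su V_{12}$, hence $x-V_{12}=V_{11}=B_{12}(\Ker\Lambda)$; it never re-opens the definition of $B_{12}$. You instead go back to the definition and prove the quasi-additivity
$B_{12}\bigl((n'_1,k'_1)+(n'_2,k'_2)\bigr)=B_{12}(n'_1,k'_1)+B_{12}(n'_2,k'_2)+n'_1(k_p(n'_2))$,
and then kill the cross-term modulo $\IIm n$ on $\Ker\Lambda$ via $k_p(n'_2)=k'_{2,p}(n)$ and the symmetry of Remark \ref{comm}; the coset structure of $V_{12}$ then falls out of the fact that $\Lambda^{-1}(p_1-p_2)$ is a single coset of $\Ker\Lambda$ (nonempty by condition (\ref{cond}), which you correctly invoke). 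The computational ingredients you use --- (\ref{add}), additivity of $k\mapsto k_c$ on $K$, and $n'(k'_p(n))=n(k'_p(n'))\equiv 0 \pmod{\IIm n}$ --- are exactly those the paper deploys inside its proof of Lemma \ref{sol}, so in effect you have re-derived the relevant special case of that lemma rather than citing it. What your route buys is self-containedness and a sharper structural statement: $B_{12}$ restricted to $\Ker\Lambda$ is an honest homomorphism into $C/\IIm n$, which in particular re-proves that $V=B_{12}(\Ker\Lambda)$ is a subgroup without appealing to Remark \ref{Vrem} (your one appeal to that remark is thus dispensable). What the paper's route buys is brevity: granted Lemma \ref{sol}, the coset property is a three-line set-theoretic argument that would work for any system of sets satisfying those two identities.
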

\begin{proof}
By remark \ref{Vrem} we have that $V_{11}$ is the group. If we fix some element $x$ in $V_{12}$ then by lemma \ref{sol} we have that $x-V_{12}\su V_{11}$ and from the other hand $x+V_{11}\su V_{12}$ so $x-V_{12}=V_{11}=B_{12}(\Ker \Lambda)$.
\end{proof}
$V_{12}$ gives us an element $v_{12}$ in $C/V$. And the second equation of the system (\ref{sis2}) is equivalent to $v_{12}$ being $0$, because in this case $0\in V_{12}$. Let us fix some element $(p_0,c_0,n,k)$ with $R((p_0,c_0,n,k))=r$ and set its index equal to $0$, this is a base for construction of our last invariant. Denote by $S(x_1)$ the element $v_{01}$. We have the equality $v_{01}+v_{12}=v_{02}$. Then $v_{12}=v_{10}-v_{20}$. So the second equation of the system (\ref{sis2}) is equivalent to $S(x_1)=S(x_2)$ and $S(x)$ is an invariant of a conjugacy class. So we have the set of invariants $\{n,k,R,S\}$, where $n\in N,k\in K, R\in P/\IIm \Lambda$ and $S\in C/V$. It is a full system of invariants because if invariants of two elements coincide then system (\ref{sis2}) is solvable.
\section{The case of odd $C/\IIm n$ and $\phi_k=0$.}
If $C/\IIm n$ is finite of an odd order then we can simplify the system (\ref{sis2}). By remark \ref{2kc} in this case we have following equalities
\begin{multline*}
k_c(n')=\fr{1}{2}(2k_c(n'))=\fr{1}{2}(\phi_k(n')+n'(k_p(n')))=\fr{1}{2}(\phi_k(n')+n'(p_1-p_2+k'_p(n)))=\\
=\fr{1}{2}(\phi_k(n')+n'(p_1-p_2))\mod \IIm n,
\end{multline*}
where $\phi_k$ is the homomorphism from $N$ to $C$. We use the first equation from system (\ref{sis2}) in the third equality. Besides, $k'_c(n)=\fr{1}{2}\phi_{k'}(n)\mod \IIm n$. Now we suppose that $\phi_s=0$ for all $s\in K$. Let us denote by $K(n)$ the group $\{k_p(n),k\in K\}$ for fixed $n$. Then we get the new system
\begin{equation}
\label{sis3}
\left\{
\begin{aligned}
p_1-p_2&=k_p(n')\pmod{K(n)}\\
2c_1-2c_2&=-n'(p_1+p_2)\pmod{\IIm n}.\\
\end{aligned}
\right.
\end{equation}
First invariant here is $R=p_1\bmod{(K(n)+k_p(N))}$. Now we fix a value $r$ of $R$. Let us consider for any $k\in K$ the homomorphism $\tilde{k_p}$ which is the composition of $k_p$ and the natural homomorphism from $P$ to $P/K(n)$. In notations from the previous section using divisibility by $2$ of elements from $C/\IIm n$ we get $V'_{12}=2V_{12}=2c_1-2c_2+(\tilde{k_p}^{-1}(p_1-p_2))(p_1+p_2)\subset C/\IIm n$. The second equation of (\ref{sis3}) is equivalent to $0\in V'_{12}$. As before we have that $V'_{12}+V'_{23}=V'_{13}$ and that $V'_{12}$ is the coset of $B_{12}(Ker \Lambda)=\Ker( \tilde{k_p}(r))$. And as the last invariant we can take $V'_{10}\in C/\Ker(\tilde{k_p}(r))$ where element with index $0$ is some fixed element whose value of $R$ equals $r$.

There is an interesting case when $\tilde{k_p}$ is surjective in $P/K(n)$. In this case the set $\tilde{k_p}^{-1}(p_1)(p_2)$ is a singleton in $C/\IIm n$ because $\tilde{k_p}^{-1}(0)(k_p(n''))=0 \pmod{\IIm n}$ for any $n''\in N$ and we have that $\tilde{k_p}^{-1}(p_1)(p_2)=\tilde{k_p}^{-1}(p_2)(p_1)$. Therefore, the second equation is equivalent to $2c_1+\tilde{k_p}^{-1}(p_1)(p_1)=2c_2+\tilde{k_p}^{-1}(p_2)(p_2) \pmod{\IIm n}$. So the full system of invariants in this case is $\{n \in N,k \in K,p \bmod{(K(n)+k_p(N))}\in P/(K(n)+k_p(N)),2c+\tilde{k_p}^{-1}(p)(p)\in C/\IIm n\}$.

\section{Extended integer Heisenberg group}
Now we consider the case of the Heisenberg group from example \ref{ex}. So we assume that $N,P,C,K$ are all isomorphic to $\mb Z$, pairings $N\times P\ra C$ and $N\times K\ra P$ are just multiplication and $k_c(n)=k\fr{n(n-1)}{2}$.
We need the following lemma.
\begin{lemma}
Suppose that we have a system of integer equations
\begin{equation}
\left\{
\begin{aligned}
ax&=b \pmod{n}\\
cx&=d \pmod{n}.\\
\end{aligned}
\right.
\end{equation}
Then it has a solution $x\in \mb Z$ if and only if $(a,n)|b$ and $d(a,n)-bcw=0\pmod {n(a,c,n)}$, where $(a,n):=gcd(a,n),(a,c,n):=gcd(a,c,n)$ and $w$ is any solution of the equation $\fr{a}{(a,n)}w=1\pmod{\fr{n}{(a,n)}}$.
\label{solv}
\end{lemma}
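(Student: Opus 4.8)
The plan is to reduce the simultaneous congruence system to a single linear congruence and then read off the solvability condition. First I would handle the first equation $ax \equiv b \pmod n$ in isolation. This is solvable in $x$ if and only if $(a,n) \mid b$, which is exactly the first stated condition. Assuming this holds, I would describe the full set of solutions of the first equation. Writing $g = (a,n)$, the solutions form a coset: $x \equiv x_0 \pmod{n/g}$ for some particular solution $x_0$. To make $x_0$ explicit I would use the element $w$ satisfying $\fr{a}{g} w \equiv 1 \pmod{\fr{n}{g}}$, which exists precisely because $\fr ag$ and $\fr ng$ are coprime; then $x_0 = \fr bg \, w$ is a particular solution, and the general solution is $x = \fr bg\, w + \fr ng\, s$ for $s \in \mb Z$.

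Next I would substitute this parametrized family into the second equation $cx \equiv d \pmod n$. This turns the problem into finding $s \in \mb Z$ with
$$
c\left(\fr bg\, w + \fr ng\, s\right) \equiv d \pmod n,
$$
i.e. $\fr{c\,n}{g}\, s \equiv d - \fr{bc}{g}\, w \pmod n$. This is again a single linear congruence in $s$, now solvable if and only if the gcd of the coefficient $\fr{cn}{g}$ and the modulus $n$ divides the right-hand side. The key arithmetic step is to compute this gcd: I expect $\gcd\!\left(\fr{cn}{g}, n\right) = \fr{n}{g}\,\gcd(c,\, g) = \fr{n}{g}\,(a,c,n)$, using $g=(a,n)$ so that $(a,c,n) = ((a,n),c) = (g,c)$. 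Clearing the factor $\fr ng$ (legitimate since every term carries it after multiplying the congruence through by $g$) should convert the divisibility condition into $(a,c,n) \mid d\,g - bc\,w$ as a congruence mod $n$, i.e. $d(a,n) - bcw \equiv 0 \pmod{n(a,c,n)}$, matching the second stated condition.

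The main obstacle I anticipate is the bookkeeping around the two gcd's and making sure the divisibility conditions are scaled correctly — in particular verifying the identity $\gcd\!\left(\fr{cn}{g}, n\right) = \fr ng\,(a,c,n)$ and tracking how multiplying the second congruence by $g$ transforms the modulus from $n$ to $n(a,c,n)$. I would double-check this by treating one prime at a time: for each prime $\ell$, compare $\ell$-adic valuations of $a,c,n$, determine the valuation of $g$ and of $(a,c,n)$, and confirm that the final congruence $d(a,n) - bcw \equiv 0 \pmod{n(a,c,n)}$ captures solvability. A secondary subtlety is that $w$ is only well defined modulo $\fr ng$, so I would note that the truth value of the condition $d(a,n)-bcw \equiv 0 \pmod{n(a,c,n)}$ is independent of the choice of $w$; this follows because changing $w$ by a multiple of $\fr ng$ changes $bcw$ by a multiple of $\fr{bcn}{g}$, which is divisible by $n(a,c,n)$ whenever $(a,n)\mid b$. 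Once these gcd identities are pinned down, both directions of the equivalence follow immediately from the elementary solvability criterion for a single linear congruence.
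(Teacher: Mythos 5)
Your proposal is correct and takes essentially the same route as the paper's proof: solve the first congruence via $(a,n)\mid b$, parametrize its solutions as $x=\frac{b}{(a,n)}w+\frac{n}{(a,n)}s$, substitute into the second congruence, and apply the gcd solvability criterion together with the identity $\bigl(\frac{cn}{(a,n)},n\bigr)=\frac{n}{(a,n)}(a,c,n)$ to obtain the stated condition. Your added observation that the condition is independent of the choice of $w$ is a useful extra check that the paper omits, but the core argument is identical.
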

\begin{proof}
It is clear that the condition $(a,n)|b$ is equivalent to the first equation being solvable. If $(a,n)|b$ then the first equation can be rewritten as $a'x=b'\pmod {\fr{n}{(a,n)}}$, where $a',b'$ are $a,b$ divided by $(a,n)$. So we have that $x=wb'+i\fr{n}{(a,n)}$, where $i\in \mb Z$ and $w$ is any solution of the equation $\fr{a}{(a,n)}w=1\pmod{\fr{n}{(a,n)}}$. Placing this expression in the second equation we get the equation
$$cb'w-d=-c\fr{n}{(a,n)}i\pmod{n}.$$
It is solvable if and only if $cb'w-d$ is divisible by $(c\fr{n}{(a,n)},n)$. Multiplying by $(a,n)$ we get the condition $bcw-d(a,n)=0\pmod{n(a,c,n)}$ because $(cn,n(a,n))=n(c,(a,n))=n(a,c,n)$.
\end{proof}
Let $(p_1,c_1,n_1,k_1)$ be conjugated with $(p_2,c_2,n_2,k_2)$. Then
we have that $n_1=n_2=n$, $k_1=k_2=k$ and there is the system similar to system (\ref{sis2})
\begin{equation}
\label{sis5}
\left\{
\begin{aligned}
p_1-p_2&=kn'-k'n\\
c_1-c_2&=k\fr{n'(n'-1)}{2}-k'\fr{n(n-1)}{2}-n'p_1 \pmod{n}.\\
\end{aligned}
\right.
 \end{equation}

Now we consider two cases: $n$ is odd and $n$ is even.

At first, suppose that $n$ is odd.
Then as in the previous section we get the system:
\begin{equation}
\label{sis4}
\left\{
\begin{aligned}
p_1-p_2&=kn'\pmod{n}\\
(2c_1+p_1)-(2c_2+p_2)&=-(p_1+p_2)n'\pmod{n}.\\
\end{aligned}
\right.
\end{equation}
Using lemma \ref{solv} we get that it is solvable if and only if $p_1=p_2\pmod{(k,n)}$ and $-p_1^2w-(n,k)(2c_1+p_1)=-p_2^2w-(n,k)(2c_2+p_2)\pmod{n(n,k,p)}$, where $w$ is any fixed solution of the equation $\fr{k}{(k,n)}w=1\pmod{\fr{n}{(n,k)}}$. So in this case  $\{n,k,p\bmod{(n,k)},-p^2w-(n,k)(2c+p)\bmod{n(n,k,p)}\}$ is a full system of invariants.

Now suppose that $n$ is even.
Multiplying by $2$ the second equation of (\ref{sis5}) and expressing $kn'$ from the first equation we get the system:
\begin{equation}
\label{sis6}
\left\{
\begin{aligned}
p_1-p_2+nk'&=kn'\pmod{2n}\\
(2c_1+p_1)-(2c_2+p_2)&=-(p_1+p_2+nk')n' \pmod{2n}.\\
\end{aligned}
\right.
 \end{equation}
We can consider the first equation above $\pmod{2n}$ because if $(k',n')$ satisfy the second equation then $(k'+2l,n')$ for any $l\in \mb Z$ also satisfy it. So if you have solution $(k',n')$ of the system (\ref{sis6}) you can find such $l\in \mb Z$ that $(k'+2l,n')$ satisfy the first equation of system (\ref{sis5}).   
The equality $n^2=0\pmod{2n}$ is used above (it holds because $n$ is even).

We consider two cases $k'\equiv 0\pmod{2}$ and $k'\equiv 1\pmod{2}$. Let $w$ be any fixed solution of the equation $\fr{k}{(k,2n)}w=1\pmod{\fr{2n}{(2n,k)}}$. In the first case the solvability by lemma \ref{solv} is equivalent to
\begin{equation}
\label{0eqv}
\left\{
\begin{aligned}
p_1&=p_2\pmod{(2n,k)}\\
-p_1^2w-(2n,k)(2c_1+p_1)&=-p_2^2w-(2n,k)(2c_2+p_2)\pmod{2n(2n,k,2p_1)}.\\ 
\end{aligned}
\right.
 \end{equation}
In the second case the solvability is equivalent to
\begin{equation}
\label{1eqv}
\left\{
\begin{aligned}
p_1&=p_2+n\pmod{(2n,k)}\\
(-p_1^2w-(2n,k)(2c_1+p_1))-wn(2p_1+n)&=\\
=(-p_2^2w-(2n,k)(2c_2+p_2))& \pmod{2n(2n,k,2p_1)},\\
\end{aligned}
\right.
 \end{equation}
We call the first case 0-equivalence and the second case 1-equivalence. Two elements are conjugated if and only if they are 0- or 1- equivalent. Composition of $\delta_1$-equivalence and $\delta_2$-equivalence is $(\delta_1+\delta_2)$-equivalence, where $\delta_1,\delta_2 \in \mb Z/2\mb Z$.
Let us introduce following notations $I_1((p,c,n,k)):= p\bmod{(2n,k)},I_2((p,c,n,k)):=p+n\bmod{(2n,k)},J_1((p,c,n,k)):=-p_1^2w-(2n,k)(2c_1+p_1),
J_2((p,c,n,k)):=-p_1^2w-(2n,k)(2c_1+p_1)-wn(2p_1+n)$. Above we prove that if $x_1$ and $x_2$ are 0-equivalent then $I_1(x_1)=I_1(x_2)$ and $J_1(x_1)=J_1(x_2)$, when $x_1$ and $x_2$ are 1-equivalent then $I_2(x_1)=I_1(x_2)$ and $J_2(x_1)=J_1(x_2)$. And it is easy to prove that in the first case we have also that $I_2(x_1)=I_2(x_2)$ and $J_2(x_1)=J_2(x_2)$, in the second case $I_1(x_1)=I_2(x_2)$ and $J_1(x_1)=J_2(x_2)$. So we get that the set $\{J_1(x),J_2(x)\}$ and the set $\{I_1(x),I_2(x)\}$ are invariants of a conjugacy class. So $\{n,k,\{I_1(x),I_2(x)\},\{J_1(x),J_2(x)\}\}$ is the full system of conjugacy class invariants.

\end{document}